\documentclass[a4paper,12pt]{amsart}
\usepackage{amsmath,amssymb,amsfonts,amsthm}
\usepackage[utf8]{inputenc}
\usepackage[T1]{fontenc}
\usepackage{tikz,relsize}
\usepackage{tikz-cd}
\usepackage[hmargin=3cm]{geometry}
\usepackage{url}
\usepackage{enumitem}
\usepackage{newfloat}
\DeclareFloatingEnvironment[fileext=lod,name=Diagram]{fdiagram}
\usepackage[mathscr]{eucal}

\newtheorem{thm}{Theorem}[section]
\newtheorem{lem}[thm]{Lemma}
\newtheorem{dfn}[thm]{Definition}
\newtheorem{prop}[thm]{Proposition}

\newtheorem{exl}[thm]{Example}
\newtheorem{rem}[thm]{Remark}

\newcommand{\Z}{{\mathbb Z}} 
\newcommand{\R}{{\mathbb R}}
\newcommand{\Q}{{\mathbb Q}}
\newcommand{\C}{{\mathbb C}}
\newcommand{\N}{{\mathbb N}}
\newcommand{\h}{{\mathbb H}}
\newcommand{\hT}{{\rm \h T}}
\newcommand{\holht}{\ensuremath{\mathcal HT}}
\newcommand{\CT}{{\rm \C T}}
\newcommand{\AIT}{{\rm \R T}}
\newcommand{\F}{{\mathbb F}}
\DeclareMathOperator{\PSL}{PSL}
\DeclareMathOperator{\SL}{SL}
\DeclareMathOperator{\Sp}{Sp}
\DeclareMathOperator{\SO}{SO}
\DeclareMathOperator{\GL}{GL}
\DeclareMathOperator{\End}{End}
\DeclareMathOperator{\Aut}{Aut}
\DeclareMathOperator{\Irr}{Irr}
\DeclareMathOperator{\res}{res}
\DeclareMathOperator{\Res}{Res}
\DeclareMathOperator{\ind}{ind}

\newcommand{\mcl}{\ensuremath{\text{McL}}}

\newcommand{\cconj}[1]{\overline{
\raisebox{0pt}[\dimexpr\height+1pt\relax]{\ensuremath{#1}}
}}

\newcommand{\ses}[7]{\ensuremath{#1 \longrightarrow #2 \stackrel{#3}{\longrightarrow} #4 \stackrel{#5}{\longrightarrow} #6 \longrightarrow #7}}

\raggedbottom

\begin{document}

\title[Flat quaternionic type manifolds]{Flat manifolds with holonomy representation of quaternionic type.}

\author{Gerhard Hiss, Rafa{\l} Lutowski and Andrzej Szczepa{\'n}ski}

\address{Gerhard Hiss: 
Lehrstuhl D f{\"u}r Mathematik, 
RWTH Aachen University,
52056 Aachen, Germany}
\email{gerhard.hiss@math.rwth-aachen.de}

\address{Rafa{\l} Lutowski, Andrzej Szczepa\'nski: 
Institute of Mathematics, 
University of Gda\'nsk, 
ul. Wita Stwosza 57, 
80-308 Gda\'nsk, Poland}
\email{rafal.lutowski@mat.ug.edu.pl, matas@ug.edu.pl}

\thanks{The third author was supported by FAPESP in Sao Paulo (Brazil) and MPI Bonn}

\subjclass[2010]{Primary: 20H15, Secondary: 20C15, 53C26, 57N16}

\date{\today}
\begin{abstract}
We are interested in the question of the existence of flat manifolds for which all $\R$-irreducible components of the
\emph{holonomy representation} are either absolutely irreducible, of complex  or of quaternionic type.
In the first two cases such examples are well known. But the existence of the third type of flat manifolds
was unknown to the authors. 
In this article we construct such an example. Moreover,
we present a list of finite groups for which a construction of manifolds of quaternionic type is
impossible.
\end{abstract}

\maketitle

\section{Introduction}
\label{Introduction}

Let $M^n$ be a flat Riemannian manifold of dimension $n$, i.e. a compact Riemannian manifold without boundary, with sectional curvature equal to zero.
From the Bieberbach theorem \cite{Sz12} it is known that the torsion free fundamental group $\Gamma = \pi_1(M^n)$ of the manifold $M^n$ defines a short
exact sequence
\begin{equation}\label{short}
0\to\Z^{n}\to\Gamma\stackrel{p}\to G\to 0,
\end{equation}
where $\Z^n$ is the unique maximal abelian subgroup of finite index in $\Gamma$. The finite group $G$ is the holonomy group of $M^n$.
By the \emph{holonomy representation} of the group $\Gamma$ we shall understand a homomorphism $\varphi_{\Gamma}\colon G\to \GL_n(\Z)$ given
by the formula
\[
\forall_{g\in G, z\in\Z^n} \varphi_{\Gamma}(g)(z) = \gamma z \gamma^{-1},
\]
where $\gamma \in \Gamma$ is any element such that $p(\gamma) = g$.
Since $\Z^n$ is a maximal abelian subgroup, $\varphi_{\Gamma}$ is a faithful representation.
Any short exact sequence (\ref{short}) corresponds to an element $\alpha_{\Gamma}\in H^2(G,\Z^n)$, where $G$ acts on $\Z^n$ via 
$\varphi_{\Gamma}.$
Moreover, an element $\alpha\in H^2(G,\Z^n)$ defines a flat manifold if and only if $\res^{G}_{C}\alpha\neq 0$ for any cyclic subgroup $C\subset G$.
Such an element $\alpha$ is called special, see \cite[p. 29, 36]{Sz12}.  
In this note we are interested in the question of the existence of flat manifolds for a special kind of holonomy representation.
We shall need some definitions.

Let $h\colon G\rightarrow \GL_m(\R)$ be a faithful representation.
Denote by $W$ a left $\R[G]$ module which is defined by the representation $h.$
We have a direct sum 
$W = V_1\oplus V_2\oplus \dots \oplus V_k$ 
of $\R[G]$-irreducible
modules $V_i$ for $i=1,2,...,k$.
It follows from \cite[Theorem (73.9)]{CR} that there are three kinds of summands: absolutely irreducible,
"complex" and "quaternionic" ones.
An irreducible $\R[G]$ module $V$ is "complex" if $\End_{\R[G]}(V) = \C$ (the complex numbers) and it is "quaternionic" if
$\End_{\R[G]}(V) = \h$ (the quaternionic field). If $\End_{\R[G]}(V) = \R$ then $V$ is absolutely irreducible.
Hence, we have a decomposition
\begin{equation}\label{decomposition} 
W = W_{\R}\oplus W_{\C}\oplus W_{\h} 
\end{equation}
into isotypic components of real, complex and quaternionic type.

There exists a similar characterization of $\C$-irreducible representations.
Let $\chi$ be character of $G$ and 
\[\nu_{2}(\chi) = \frac{1}{|G|}\sum_{g\in G}\chi(g^2).\]
By definition $\nu_2(\chi)$ is the Frobenius-Schur indicator. 

Let $V$ be as above an $\R[G]$-irreducible representation. Let $U$ be a simple submodule of the $\C[G]$-module $\C \otimes_\R V$. Then $\C \otimes_\R V = U$, which means that $V$ is absolutely irreducible or $\C \otimes_\R V = U \oplus \cconj{U}$, where $\cconj{U}$ denotes the conjugate module \cite[pages 106-109]{S77}. In the latter case, if $U\not\simeq\cconj{U}$ then $\End_{\R[G]}(V) = \C$ and if $U\simeq\cconj{U}$ then $\End_{\R[G]}(V) = \h$. Let $\chi_U$ denote the character of $U$. From \cite[p. 58]{Is76} we have
\[
\nu_2(\chi_{U}) = \left\{\begin{array}{rll}
1 & \mbox{, if $V$ is absolutely irreducible},\\
0 & \mbox{, if $V$ is of complex type},\\
-1 & \mbox{, if $V$ is of quaternionic type}.
\end{array}
\right.
\]

So knowing the irreducible representation on complex
spaces and their Frobenius-Schur indicators allows one to classify irreducible representations on real space.
  
In this paper we are interested in the question of the existence of flat manifolds whose $\R$-irreducible components of 
the holonomy representation
are either absolutely irreducible, or only of complex type or only of quaternionic type. Let us denote the
above classes of flat manifolds by $\AIT$, $\CT$ and $\hT$ respectively.

Equivalently, we are looking for a faithful $\Z G$ module $W$ such that $H^2(G,W)$ contains a special element and  $\R \otimes_\Z W = (\R \otimes_\Z W)_{F}$ for $F = \R, \C, \h$ respectively in the decomposition
(\ref{decomposition}).

In the first two cases (i.e. $F =\R$ and $F = \C$) such examples are well known and we shall present them in the next section. For example
any Hantzsche-Wendt manifold has holonomy representation whose $\R$-irreducible components are all absolutely irreducible.
Similarly, any flat manifold with holonomy group $(\Z_{3})^k$ and the first Betti number equal to zero and dimension
$2(2+k)$ (\cite[Th. 4.2]{Sz12}) has holonomy representation with $\R$-irreducible components only of complex type.

However in the case $F = \h$ such examples were unknown.
One of our main results (Theorem \ref{main}) is a construction of a flat manifold in the class $\hT$. In the final section we shall
give a list of finite groups which are not holonomy groups of flat manifolds in the class $\hT$.

An even dimensional flat manifold $M^{2n}$ is K\"ahler if and only if
all $\R$-irreducible summands of the holonomy representation $\varphi_{\pi_{1}(M^{2n})}$, 
which are also $\C$-irreducible occur with even multiplicity, see \cite[Prop. 7.2]{Sz12}. By the above definition
any flat manifold in the class $\CT$ is K\"ahler. On the other side,
any K\"ahler flat manifold with non-zero first Betti number  does not belong to the class $\CT.$

\begin{center}
\begin{tikzpicture}[rounded corners,line width=1]
\draw (-1,-.4) rectangle (3,.4);
\node at (-.5,.1) {$\C$T};

\draw (-1.5,-.6) rectangle (3.5,1);
\node at (0.5,.7) {K\"ahler flat manifolds};
\end{tikzpicture}
\end{center}

Similarly, any flat manifold $M^{4n}$ of dimension $4n$ of the class $\hT$
is hyperk\"ahler, i.e. $\varphi_{\pi_{1}(M^{4n})}(G)\subset \Sp(4n)$, see \cite{Hitchin}.
In fact, from (cf. \cite[p.58]{Is76}) it is well known that any irreducible complex representation $V$ with
$\nu_2(V) = -1$, has even dimension and carries a $G$-invariant, non-degenerate symplectic form.
However, any hyperk\"ahler flat manifold with the first Betti number different from zero is not $\hT$. 
See also \cite{W82} and \cite{DM01}.

\begin{center}
\begin{tikzpicture}[rounded corners,line width=1]
\draw (-1,-.4) rectangle (3,.4);
\node at (-.5,.1) {$\hT$};

\draw (-1.5,-.6) rectangle (3.5,1);
\node at (1,.7) {hyperk\"ahler flat manifolds};
\end{tikzpicture}
\end{center}

For any finite group $G$, it has been observed (see \cite{John} and \cite[Prop. 3.2]{DM01}) that using "the double" 
construction it is possible to define  K\"ahler and hyperk\"ahler
manifolds with holonomy group $G$. 

\section{Real and complex case}
To begin with, we shall present a class of examples of flat manifolds 
whose \emph{holonomy representations} have a decomposition (\ref{decomposition}) with only absolutely irreducible components.
Any flat manifold with holonomy group $(\Z_2)^k, k\geq 2$
has such property. The simplest ones are the Klein Bottle
and the $3$-dimensional Hantzsche-Wendt manifold whose fundamental group is defined as follows, see \cite{Sz12}: 
\[
\Gamma = \langle\gamma_1 = (A, (1/2, 1/2, 0)), 
\gamma_2 = (B, (0, 1/2, 1/2))\rangle \subset \SO(3)\ltimes\R^3,
\]
where
$A =\left[
\begin{array}{lll}
1 & 0 & 0\\
0 &-1 & 0\\
0 & 0 &-1
\end{array}
\right] \text{ and } B =\left[
\begin{array}{lll}
-1 & 0 & 0\\
0 & 1 & 0\\
0 & 0 &-1
\end{array}
\right ].$

Let us present an example of a flat manifold $M_{k}$  where all components of the holonomy representation
are of "complex" type (have Frobenius-Schur indicator equal to $0$).
The manifold $M_k$ has holonomy group $(\Z_3)^k$, first Betti number zero and dimension equal to $2(k+2), k\geq 2$.

For example, to define the manifold $M_2$ we shall need matrices:
\[
D = \left[
\begin{array}{llll}
I_2 & 0 & 0 & 0\\
0 & C & 0 & 0\\
0 & 0 & C & 0\\
0 & 0 & 0 & C
\end{array}
\right], E = \left[
\begin{array}{llll}
C & 0 & 0 & 0\\
0 & I_2 & 0 & 0\\
0 & 0 & C & 0\\
0 & 0 & 0 & C^2
\end{array}
\right],
\text{ where }
C = \left[
\begin{array}{rr}
-1 & -1\\
1 & 0
\end{array}
\right ]
\]
and $I_k$ is the identity $k\times k$ matrix, for any $k \in \N$. Thus $D$ and $E$ are invertible integer matrices of degree $8$.
We define two elements
\[\gamma_{D} = (D,(-2/3,1/3,0,0,-2/3,1/3,-2/3,1/3)),\]
\[\gamma_{E} = (E, (0,0,-2/3,1/3,0,0,0,0))\]
of the group $SO(8)\ltimes\R^8$.
Let $\Gamma_{2} = \langle \gamma_{D},\gamma_{E},(I_8,x) : x\in \Z^8 \rangle$ and $M_2 = \R^8/\Gamma_2$. 
We have (see \cite{Sz12}) that $\pi_1(M_2) = \Gamma_2$.

\section{A flat manifold in the class $\hT$}

In this section we present one of the main results of our article.
\begin{thm}\label{main}
There exists a flat manifold with holonomy representation of quaternionic type. 
\end{thm}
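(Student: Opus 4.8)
The plan is to reformulate the statement as a problem in integral representation theory and group cohomology, and then to exhibit an explicit Bieberbach group. By the discussion preceding the theorem it suffices to find a finite group $G$ together with a faithful $\Z G$-lattice $W$ such that $\R\otimes_\Z W$ is a direct sum of $\R$-irreducible modules all of quaternionic type, that is with $\End_{\R[G]}=\h$, equivalently with all Frobenius--Schur indicators equal to $-1$; and such that $H^2(G,W)$ contains a special class $\alpha$. Indeed, given such a triple $(G,W,\alpha)$, the corresponding extension $\Gamma$ of $G$ by $W=\Z^n$ is torsion free by the criterion quoted from \cite{Sz12}, and $M=\R^n/\Gamma$ is a flat manifold whose holonomy representation is, by construction, the prescribed purely quaternionic representation. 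This is the manifold we seek, so the whole problem is to produce the pair $(G,W)$ and the class $\alpha$.

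First I would pin down the genuine obstruction, which is the interaction between ``purely quaternionic'' and ``special''. Recall that $\alpha$ is special exactly when $\res^G_C\alpha\neq 0$ for every cyclic $C$, and that for torsion freeness it is enough to test the subgroups $C$ of prime order. For an involution $\iota$ one has $H^2(\langle\iota\rangle,W)\cong W^{\langle\iota\rangle}/(1+\iota)W$, and over $\Z$ every $\langle\iota\rangle$-lattice is a sum of copies of the trivial module, the sign module and the regular module $\Z[\langle\iota\rangle]$; only the trivial summands contribute, so this group is $(\Z/2\Z)^{t}$ where $t$ is the number of trivial summands of $\res^G_{\langle\iota\rangle}W$. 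Hence a necessary condition is that $\res^G_{\langle\iota\rangle}W$ contain a trivial direct summand; note this is strictly stronger than asking that $\iota$ have a $+1$-eigenvalue, since the regular module has a $+1$-eigenvalue yet vanishing cohomology. Now a faithful quaternionic irreducible lands in a symplectic group, so a central involution acts on it by the scalar $-I$, making its restriction a sum of sign modules with $t=0$. A single quaternionic module can therefore never be special, and the same scalar argument shows that $G=Q_8\times Q_8$ fails, because the diagonal central involution acts as $-I$ on every quaternionic constituent.

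The key idea is to defeat this obstruction by distributing the central involutions over several quaternionic summands. Concretely I would look for $G$ whose centre contains $\langle z_1,z_2\rangle\cong\Z/2\Z\times\Z/2\Z$, and build $W=W_1\oplus W_2\oplus W_3$ from quaternionic modules $W_i$ whose kernels contain $z_1$, $z_2$ and $z_1z_2$ respectively, arranged so that each $W_i$ still detects a central $-I$ through the remaining involutions. Then every nontrivial element of $\langle z_1,z_2\rangle$ acts trivially on exactly one summand, producing the required trivial $\Z$-summand there and curing the obstruction at the central involutions. The delicate constraint is that this forces the three quotients $G/\langle z_1\rangle$, $G/\langle z_2\rangle$ and $G/\langle z_1z_2\rangle$ to still possess faithful irreducibles of quaternionic type; this is precisely what rules out $Q_8\times Q_8$, whose central quotient $Q_8\circ Q_8\cong 2^{1+4}_+$ is of orthogonal type. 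Locating a group and an integral lattice with this covering property --- most plausibly through a computer search over small groups together with their quaternionic $\Z G$-lattices --- is the step I expect to be the main obstacle.

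Once $G$ and $W$ are fixed, the remaining work is verification. I would check faithfulness and that $\R\otimes_\Z W$ is purely quaternionic by computing the Frobenius--Schur indicators of its constituents, then determine $H^2(G,W)$ and write down an explicit representing cocycle $\alpha$, i.e. a choice of translation vectors. The final and most computational point is to confirm that $\res^G_C\alpha\neq 0$ for every cyclic $C$ of prime order: the central involutions are controlled by the covering property above, while the non-central involutions, the odd-order elements and any order-$4$ elements are handled by a direct computation of the relevant $\hat H^0$ from the integral action on $W$. Exhibiting one class that restricts nontrivially to all of these subgroups simultaneously is the subtle part; but as soon as such an $\alpha$ is found, $\Gamma$ is a torsion-free Bieberbach group and $M=\R^n/\Gamma$ is a flat manifold in the class $\hT$, as required.
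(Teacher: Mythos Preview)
Your plan is correct and matches the paper's approach almost exactly: the paper takes $G$ to be the group $[64,245]$, a $2$-group with centre $C_2\times C_2$ whose three nontrivial central elements are its \emph{only} involutions, and with exactly three quaternionic irreducibles $\chi_1,\chi_2,\chi_3$ each having one of those involutions in its kernel; the lattice is $W=M\oplus M^{f_2}\oplus M^{f_3}$ precisely as you propose. The one refinement worth noting is that the paper exploits the fact that $\Aut(G)$ permutes the $\chi_i$ transitively, so only a single lattice $M$ (with character $4\chi_1$) and a single cohomology computation are needed --- the other two summands and their cocycles are obtained by twisting $M$ by automorphisms $f_2,f_3$, and the commutativity of restriction with this twist transports $\res_{C_1}\alpha\neq 0$ to the other two central involutions automatically.
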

\begin{proof}

Let the presentation of a group $G$ be given as follows. $G$ fits into the central extension
\[
\ses{1}{C_2^2}{}{G}{}{C_2^4}{1},
\]
it is generated by the elements $a,b,c,d$ of order $4$, such that $a^2=c^2$ and $b^2=d^2$ generate the center $Z(G)$ and we have commutator relations:
\[
\begin{array}{lll}
[a,b] = a^2 & [a,c] = a^2b^2& [a,d] = b^2\\
            & [b,c] = a^2   & [b,d] = a^2b^2\\
            &               & [c,d] = 1
\end{array}
\]
This is an example of a skew group, i.e. a group for which we have
\[
\forall_{\chi \in \Irr(G)} \chi(1) = 1 \vee \nu_2(\chi) = -1,
\]
where $\Irr(G)$ denotes the set of complex irreducible characters of $G$. In the library of small groups of GAP \cite{GAP16} $G$ is labeled as [64,245].

\begin{table}[ht]
\[
\begin{array}{l|rrrr}
       & 1 & a^2 & b^2 & a^2b^2\\ \hline
\chi_1 & 4 &   4 &  -4 &     -4\\
\chi_2 & 4 &  -4 &   4 &     -4\\
\chi_3 & 4 &  -4 &  -4 &      4\\
\end{array}
\]
\caption{List of nonzero values of characters $\chi_1,\chi_2,\chi_3$.}
\label{tab:char}
\end{table}

$G$ has exactly $3$ characters $\chi_1, \chi_2, \chi_3$ with Frobenius-Schur indicator equal to $-1$. They are distinguished by their values on the center only, since on every element of the set $G \setminus Z(G)$ each of them is equal to zero. The non-zero values are presented in Table \ref{tab:char}. Moreover, the characters are conjugate one to another, which means that for every $1 \leq i < j \leq 3$ there exists an automorphism $f_{ij} \in \Aut(G)$ such that $\chi_j = \chi_i \circ f_{ij}$. Let us introduce a notation for two of those automorphisms:
\begin{equation}
f_2 := f_{12}, f_3 := f_{13}.
\end{equation}
From Table \ref{tab:char} and the fact that the center $Z(G)$ is a characteristic subgroup of $G$ we have that
\[
f_2(b^2) = f_3(a^2b^2) = a^2.
\]

Now, for every $f \in \Aut(G)$, let $M^f$ denote the $G$-module twisted by $f$, i.e. $M^f = M$ and the action $\cdot_f$ comes from the original action of $G$ on $M$ as follows:
\[
g \cdot_f m = f(g) m,
\]
where $g \in G, m \in M^f$. Moreover, for every such automorphism $f$ and every subgroup $H$ of $G$ we have Diagram \ref{diag:commutativity}, where $(f_{|H})^*$ is an isomorphism. Hence, if we can find a $G$-lattice $M$ and an element $\alpha \in H^2(G,M)$ such that $\res_{C_1}\alpha \neq 0$, then
\[
\res_{C_i}f_i^*(\alpha)  = (f_{i|C_i})^*\res_{C_1}\alpha \neq 0,
\]
for $i=2,3$, where $C_i = \ker\chi_i$ is the group generated by $a^2,b^2$ and $a^2b^2$, for $i=1,2,3$ respectively. Hence the element
\begin{equation}
\label{eq:special}
\alpha + f_2^*(\alpha) + f_3^*(\alpha) \in H^2(G,M \oplus M^{f_2} \oplus M^{f_3})
\end{equation}
is special, since non-trivial elements of $Z(G)$ are the only elements of $G$ of order $2$.

\begin{fdiagram}
\[
\begin{tikzcd}
{H^{2}(G,M)} \arrow[r,"","f^*"] \arrow[d,"\res_{f(H)}"] 
& {H^2(G,M^f)} \arrow[d,"\res_{H}"] \\
{H^2(f(H),\res_{f(H)}M)} \arrow[r,"(f_{|H})^*"] 
& {H^2(H,\res_{H}M^f)}
\end{tikzcd}
\]
\caption{}
\label{diag:commutativity}
\end{fdiagram}

The construction of the lattice $M$ which works for the group $C_1$ in the above sense is as follows. Let $S_1 = \ind_{C_{1}}^{G} \Z$ 
be induced from the trivial $C_1$-module $\Z$. 
If we denote by $\chi$ the character of the $G$-module $S_1$, then direct calculations show that
\[
\langle \chi, \chi_1 \rangle = 4 \text{ and } \langle \chi, \chi_2 \rangle = \langle \chi, \chi_3 \rangle = 0.
\]
This means that there is a homogeneous component of the $\C G$-module $\C\otimes_\Z S_1$ of dimension $16$, which contains  quaternionic summands only.
Moreover, since $\chi_1$ is not the character of a real module, by \cite[Corollary 10.14]{Is76} we get that $2\chi_1$ is the character of some $\Q G$-module. If $\rho_1 \colon G \to \GL(32,\Z)$ is a permutation representation corresponding to the $G$-module $S_1$ (see below), then the procedure of finding homogeneous components in representations, presented in \cite[Section 2.6]{S77}, allows us to find a basis of the submodule of $S_1$ with character $4\chi_1$. To be more precise, 16 columns of the matrix
\[
B = \frac{2\chi_1(1)}{|G|} \sum_{g \in G} \cconj{2\chi_1(g)}\rho_1(g),
\]
up to multiplication by a rational constant, form a $\Z$-basis of a pure $\Z$-submodule of $S_1$, which admits a $G$ action. If we denote this module by $M$ then $M \cong \Z^{16}$ and computer calculations show that $H^2(G,M) = \langle \alpha \rangle \simeq \Z_2 $. Moreover, 
$\res^G_{C_1} \alpha \neq 0$
and the formula \eqref{eq:special} defines a special element which corresponds to the torsion-free extension
\[
\ses{0}{M \oplus M^{f_2} \oplus M^{f_3}}{}{\Gamma}{}{G}{1}.
\]
Since the character of the $G$-module $M \oplus M^{f_2} \oplus M^{f_3}$ is equal to $4(\chi_1+\chi_2+\chi_3)$, it must be faithful, cf. Table \ref{tab:char}, and $\Gamma$ is a Bieberbach group.

The representation $\rho_1$ may be defined as follows:
\[
\setlength{\arraycolsep}{.1em}
\begin{array}{lll}
a & \mapsto & P_{(1\,13)(2\,14)(3\,9)(4\,10)(5\,21)(6\,22)(7\,11)(8\,12)(15\,24)(16\,23)(17\,26)(18\,25)(19\,27)(20\,28)(29\,31)(30\,32)},\\
b & \mapsto & P_{(1\,3\,2\,4)(5\,18\,6\,17)(7\,15\,8\,16)(9\,14\,10\,13)(11\,24\,12\,23)(19\,30\,20\,29)(21\,25\,22\,26)(27\,32\,28\,31)},\\
c & \mapsto & P_{(1\,5)(2\,6)(3\,18)(4\,17)(7\,20)(8\,19)(9\,26)(10\,25)(11\,27)(12\,28)(13\,22)(14\,21)(15\,29)(16\,30)(23\,31)(24\,32)},\\
d & \mapsto & P_{(1\,8\,2\,7)(3\,15\,4\,16)(5\,19\,6\,20)(9\,23\,10\,24)(11\,14\,12\,13)(17\,30\,18\,29)(21\,28\,22\,27)(25\,32\,26\,31)},\\
\end{array}
\]
where $P_s$ denotes the permutation matrix of degree $32$ of an element $s$ of $S_{32}$, the symmetric group on $32$ letters.
 With this specific representation a basis of the module $M$ consists of the elements
\[
e_1-e_2,e_3-e_4,\ldots,e_{31}-e_{32},
\]
where $e_i$ denotes the $i$-th column of the identity matrix of degree $32$. In this case the structure of the $G$-module $M=\Z^{16}$ is given by the integral representation $\rho_M \colon G \to \GL(16,\Z)$ defined as follows
\[
\setlength{\arraycolsep}{.1em}
\def\arraystretch{1.3}
\begin{array}{lll}
a & \mapsto & P^{8,9,12,13}_{( 1\, 7)( 2\, 5)( 3\,11)( 4\, 6)( 8\,12)( 9\,13)(10\,14)(15\,16)},\\
b & \mapsto & P^{2,3,5,6,8,10,13,14}_{( 1\, 2)( 3\, 9)( 4\, 8)( 5\, 7)( 6\,12)(10\,15)(11\,13)(14\,16)},\\
c & \mapsto & P^{2,4,5,7,9,10,11,13}_{( 1\, 3)( 2\, 9)( 4\,10)( 5\,13)( 6\,14)( 7\,11)( 8\,15)(12\,16)},\\
d & \mapsto & P^{1,6,8,9,10,11,12,13}_{( 1\, 4)( 2\, 8)( 3\,10)( 5\,12)( 6\, 7)( 9\,15)(11\,14)(13\,16)},\\
\end{array}
\]
where the superscripts define rows of a permutation matrix which should be multiplied by $-1$.
The cocycle $\hat{\alpha}$ which defines non-zero cohomology class $\alpha$ in the group $H^1(G,\Q^{16}/\Z^{16}) \cong H^2(G,\Z^{16})$ is given by:
\[
\setlength{\arraycolsep}{.1em}
\renewcommand{\arraystretch}{1.3}
\begin{array}{lll}
a & \mapsto & \left( 0, \tfrac{1}{2}, 0, 0, 0, \tfrac{1}{2}, \tfrac{1}{2}, \tfrac{1}{2}, \tfrac{1}{2}, 0, \tfrac{1}{2}, 0, 0, \tfrac{1}{2}, \tfrac{1}{2}, 0 \right) + \Z^{16},\\
b & \mapsto & \left( 0, 0, 0, \tfrac{1}{2}, 0, \tfrac{1}{2}, 0, \tfrac{1}{2}, 0, \tfrac{1}{2}, 0, \tfrac{1}{2}, 0, \tfrac{1}{2}, \tfrac{1}{2}, \tfrac{1}{2} \right) + \Z^{16},\\
c & \mapsto & \left( 0, \tfrac{1}{2}, \tfrac{1}{2}, 0, 0, \tfrac{1}{2}, \tfrac{1}{2}, \tfrac{1}{2}, 0, \tfrac{1}{2}, 0, 0, \tfrac{1}{2}, 0, 0, \tfrac{1}{2} \right) + \Z^{16},\\
d & \mapsto & \left( 0, 0, 0, 0, 0, 0, 0, 0, 0, 0, 0, 0, 0, 0, 0, 0 \right) + \Z^{16}.\\
\end{array}
\]
One gets that $\rho_M(a^2) = I_{16}$, the identity matrix of degree $16$, and that
\[
\hat{\alpha}(a^2) = \left(\tfrac{1}{2},\tfrac{1}{2},\tfrac{1}{2},\tfrac{1}{2},\tfrac{1}{2},\tfrac{1}{2},\tfrac{1}{2},\tfrac{1}{2},\tfrac{1}{2},\tfrac{1}{2},\tfrac{1}{2},\tfrac{1}{2},\tfrac{1}{2},\tfrac{1}{2},\tfrac{1}{2},\tfrac{1}{2}\right) + \Z^{16},
\]
hence $\res_{C_1} \alpha \neq 0$.

The calculations are presented in more detail on the web page \cite{Lu19}.
\end{proof}

\section{Holonomy groups of flat manifolds from the class $\hT$}

Let $\holht$ denote the class of holonomy groups of manifolds from the class $\hT$. This section is motivated by the search for further examples in  \holht. We will present some necessary conditions for a group $G$ to belong to \holht. Then we will exclude certain simple groups from the class \holht. Finally, we comment on $2$-groups.

\subsection{Some necessary conditions}

\begin{dfn}[{\cite[Definition 4.2]{Sz12}}]
We say that a finite group is primitive if it is a holonomy group of a flat manifold with first Betti number equal to zero.
\end{dfn}

Since the holonomy representation of any flat manifold with non-zero first Betti is of non-quaternionic type,
a group in $\holht$ is primitive.
Moreover by \cite[Theorem 4.1]{Sz12} a finite group is primitive if and only if
no non-trivial cyclic Sylow $p$-subgroup of $G$ has a normal complement.

\begin{exl}
By \cite[Proposition 4.2]{Sz12} the following groups are non-primitive:
\begin{enumerate}[label=(\roman*)]
\item metacyclic groups $\Z_n\rtimes\Z_m, (m,n) = 1$;
\item the Borel subgroup $\Z_q\rtimes\Z_{q-1}\subset \SL_2(q)$, where $q$ is power of a prime $p$;
\item the group $\SL_2(3)$ (it has a normal $3$-complement).
\end{enumerate}
\end{exl}

\begin{prop}
\label{prop:htproperties}
Let $G \in \holht$. Then all of the following hold:
\begin{enumerate}[label=\arabic*.,ref=\arabic*]
\item \label{ht:even} 
	$G$ has even order.
\item \label{ht:non-abelian}
	$G$ is non-abelian.
\item \label{ht:center} 
	$Z(G)$ is an elementary abelian $2$-group.
\item \label{ht:primitive}
	If $p$ is a prime such that $p \mid G$ and a Sylow $p$-subgroup of $G$ is cyclic, then $G$ does not have a normal $p$-complement.
\item \label{ht:primitive2}
	The Sylow $2$-subgroup of $G$ is non-cyclic.
\item \label{ht:squareroots}
	Put $I(G):=|\{ g \in G \mid g^2=1 \}|$. Then \[I(G) < \sum_{\chi\in \Irr(G)}\chi(1) \text{ and } I(G) \leq |G|/2.\]
\item \label{ht:nonhomogeneous}
	$|\{ \chi \in \Irr(G) \mid \nu_2(\chi) = -1 \}| \geq 2$.
\item \label{ht:centerrep}
	Suppose $z \in Z(G)$ with $|z|=2$. Then there exist $\chi,\psi \in \Irr(G)$ with $\nu_2(\chi) = \nu_2(\psi) = -1$ and $\chi(z) = \chi(1), \psi(z)=-\psi(1)$. 
\end{enumerate}
\end{prop}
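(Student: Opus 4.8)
The plan is to handle the eight assertions in four groups according to the tools they need: the purely character-theoretic facts (\ref{ht:even})--(\ref{ht:center}), the primitivity and Sylow facts (\ref{ht:primitive})--(\ref{ht:primitive2}), the Frobenius--Schur count (\ref{ht:squareroots}), and the two facts (\ref{ht:nonhomogeneous})--(\ref{ht:centerrep}) that genuinely exploit the special class. Throughout I would fix a faithful $\Z G$-lattice $W$ realising $G \in \holht$ and a special class $\alpha \in H^2(G,W)$, so that $\res_C\alpha \neq 0$ for every cyclic $C \leq G$, while $\C \otimes_\Z W$ is a sum of irreducibles $U$ with $U \cong \cconj{U}$ and $\nu_2(\chi_U) = -1$; in particular at least one quaternionic character occurs.

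For (\ref{ht:even}) and (\ref{ht:non-abelian}) I would observe that the mere existence of a character with $\nu_2 = -1$ already forces even order and non-abelianness: a group of odd order has only the trivial real-valued irreducible character, so all its indicators lie in $\{0,1\}$, and every irreducible character of an abelian group is linear, again with indicator in $\{0,1\}$. For (\ref{ht:center}) I would invoke Schur's lemma: a central $z$ acts as a scalar in each $U$, and since $\chi_U$ is real-valued that scalar is $\pm 1$; hence $z^2$ acts trivially on $\C\otimes_\Z W$, and faithfulness gives $z^2 = 1$, so $Z(G)$ is elementary abelian of exponent $2$. Then (\ref{ht:primitive}) and (\ref{ht:primitive2}) are quick: a non-zero first Betti number would contribute a trivial, hence absolutely irreducible, summand to $\R\otimes_\Z W$, contradicting quaternionic type, so $G$ is primitive and (\ref{ht:primitive}) is exactly the criterion of \cite[Theorem 4.1]{Sz12}; applying it with $p=2$ together with the classical fact that a cyclic Sylow $2$-subgroup forces a normal $2$-complement yields (\ref{ht:primitive2}).

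The counting in (\ref{ht:squareroots}) rests on the Frobenius--Schur formula $I(G) = \sum_{\chi}\nu_2(\chi)\chi(1)$. The first inequality is immediate, since
\[
\sum_{\chi}\chi(1) - I(G) = \sum_{\nu_2(\chi)=0}\chi(1) + 2\!\!\sum_{\nu_2(\chi)=-1}\!\!\chi(1) > 0,
\]
the last sum being positive because a quaternionic character exists. For the second inequality I would fix one such $\chi$ (so $\chi(1)\geq 2$) and expand $\sum_{g}\chi(g^2) = |G|\,\nu_2(\chi) = -|G|$, separating the $I(G)$ terms with $g^2=1$ (each equal to $\chi(1)$) from the remaining $|G|-I(G)$ terms, each bounded by $\chi(1)$ in absolute value; this gives $|G|(\chi(1)-1) \geq 2I(G)\chi(1)$, whence $I(G) < |G|/2$.

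The last two assertions are where the special class enters, and I expect (\ref{ht:nonhomogeneous}) to be the main obstacle. I would prove (\ref{ht:centerrep}) first: given a central involution $z$, faithfulness forces $z$ to act as $-1$ on some summand of $\C\otimes_\Z W$, producing a quaternionic $\psi$ with $\psi(z) = -\psi(1)$, while $\res_{\langle z\rangle}\alpha \neq 0$ forces $H^2(\langle z\rangle, W) \cong W^{\langle z\rangle}/(1+z)W \neq 0$, hence $W^{\langle z\rangle}\neq 0$, so $z$ acts as $+1$ on some summand, producing a quaternionic $\chi$ with $\chi(z)=\chi(1)$. These are distinct, so (\ref{ht:nonhomogeneous}) follows at once whenever $Z(G)$ contains an involution. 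The difficulty is to exclude a single quaternionic character when $Z(G)$ has none: faithfulness then makes the unique character $\chi_1$ afford a faithful irreducible representation $\rho_1$, so $Z(G)$ is cyclic and, by (\ref{ht:center}), equals $1$ or $C_2$; the $C_2$ case is impossible, since there (\ref{ht:centerrep}) would demand two distinct quaternionic characters. This leaves the centerless case, where $\res_C\alpha\neq 0$ forces $W^{C}\neq 0$ and hence every element of $G$ to have eigenvalue $1$ under $\rho_1$. The crux, and the step I expect to be genuinely hard, is to show that a faithful irreducible quaternionic representation---whose image lies in a symplectic group and, being centerless, omits the scalar $-1$---must nonetheless contain an element acting without the eigenvalue $1$; such a fixed-point-free element contradicts specialness and completes (\ref{ht:nonhomogeneous}).
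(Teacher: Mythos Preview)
Your arguments for items (\ref{ht:even})--(\ref{ht:center}), (\ref{ht:primitive})--(\ref{ht:primitive2}), and (\ref{ht:centerrep}) are correct and essentially match the paper's. For (\ref{ht:squareroots}) you give an explicit elementary derivation from the Frobenius--Schur count, whereas the paper simply cites Wall and Berkovich--Kazarin--Zhmud'; your direct argument is a small bonus.

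The real issue is (\ref{ht:nonhomogeneous}). The paper does not attempt to prove it here at all: it invokes the main theorem of \cite{Lu18}, which states that the holonomy representation of \emph{any} flat manifold with non-trivial holonomy group is non-homogeneous over $\R$. That is a separate, non-trivial result, and it immediately rules out a single quaternionic isotype. Your plan instead tries to derive (\ref{ht:nonhomogeneous}) internally: reduce via (\ref{ht:center}) and (\ref{ht:centerrep}) to the centerless case, then argue that a faithful irreducible quaternionic representation of a centerless group must contain an element with no eigenvalue~$1$, contradicting $W^{\langle g\rangle}\neq 0$. You correctly flag this last step as the crux, and indeed it is a genuine gap: you have not supplied an argument for it, and none is apparent. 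Centerless groups with quaternionic irreducibles do exist (e.g.\ the sporadic group $\mcl$, as the paper itself notes later), so there is no cheap structural obstruction; the question of whether such a representation can have every element fixing a vector is not obviously answerable by the tools you have assembled. The clean fix is exactly what the paper does: cite \cite{Lu18} for non-homogeneity and move on.
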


\begin{proof}
\begin{description}[before={\renewcommand\makelabel[1]{\bfseries Ad.\,##1:}}]
\item[\ref{ht:even}] By assumption the map  $l:G\to G, g\mapsto g^2$ is a bijection. Hence $\nu_2(\chi)\neq -1$ for any $\chi\in \Irr(G)$.
In fact $\nu_2(\chi) = \frac{1}{|G|}\Sigma_{g\in G}\chi(g^2) = \frac{1}{|G|}\Sigma_{g\in G}\chi(g) = \langle\chi, 1\rangle.$
Here $\langle,\rangle$ denotes the scalar product of characters, and $1$ the trivial character.

\item[\ref{ht:non-abelian}] The Frobenius-Schur indicator of every character of every abelian group is equal to $0$ or $1$.

\item[\ref{ht:center}] Suppose that $Z(G)$ contains an element $z$ with $|z|=4$ or $|z|=p$ for some odd prime $p$. Let $\psi$ be a faithful complex character of $G$. Then there is some irreducible constituent $\chi$ of $\psi$ such that $z^2$ is not in the kernel of $\chi$. As $\res_{Z(G)}\chi = \chi(1)\lambda$ for some $\lambda \in \Irr(Z(G))$, we obtain $\chi(z) \in \C \setminus \R$. Hence $\chi \neq \cconj{\chi}$ and so $G$ cannot have a faithful quaternionic representation.

\item[\ref{ht:primitive}] This is in fact a definition of a primitive group (see above).

\item[\ref{ht:primitive2}] This follows from \ref{ht:primitive} and \cite[Theorem 1]{T}.

\item[\ref{ht:squareroots}] By \cite[Lemma 2 on p. 252]{Wall} and \cite[p. 155]{Gruyter}, if any of given conditions does not hold then for every $\chi \in \Irr(G)$ we have $\nu_2(\chi)=1$.

\item[\ref{ht:nonhomogeneous}] The fact that the holonomy representation of a flat manifold with non-trivial holonomy group is non-homogeneous is the main result of \cite{Lu18}.

\item[\ref{ht:centerrep}] By assumption there exists a faithful $G$ lattice $L$ and a special element $\alpha\in H^2(G,L)$.
If $\chi(z) = \chi(1)$ for all $\chi \in \Irr(G)$, then $z$ is in the kernel of $\C\otimes_{\Z} L$, i.e. $z$ acts trivially on $L$,
and we have a contradiction. Suppose that $\chi(z) = -\chi(1)$ for all  $\chi \in \Irr(G)$. Then $z$ acts as $-1$ on  $\C\otimes_{\Z} L$,
hence $z$ acts as $-1$ on $L$. But then $H^2(G,L)$ cannot contain a special element, since $H^2(\langle z\rangle, \res_{\langle z\rangle}L) = 0$.
\end{description}
\end{proof}

\subsection{Some simple groups and their covering groups}

\begin{lem}\label{Remark2}
Let $G$ be a finite group and $p$ a prime number. 
Let $O_{p'}(G)$ denote the maximal normal subgroup of order prime to $p$.
Then $O_{p'}(G)$ is contained in the kernel of every $\chi\in\Irr(G)$
in the principal $p$-block. 
\end{lem}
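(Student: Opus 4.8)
The plan is to exhibit an explicit central idempotent of the group algebra that simultaneously detects the subgroup $N := O_{p'}(G)$ and contains the principal block, and then to read off the conclusion. Fix a $p$-modular system $(\mathcal{O},K,k)$ with $\mathcal{O}$ a complete discrete valuation ring of characteristic $0$, residue field $k$ of characteristic $p$, both large enough to split $G$. Since $p \nmid |N|$, the scalar $1/|N|$ lies in $\mathcal{O}$, so we may form
\[
e_N = \frac{1}{|N|}\sum_{n \in N} n \in \mathcal{O}G .
\]
A direct computation gives $e_N^2 = e_N$, and since $N$ is normal in $G$, conjugation by any $g \in G$ permutes $N$ and hence fixes $e_N$; as the conjugation-invariant elements of $\mathcal{O}G$ are exactly the $\mathcal{O}$-span of the class sums, this shows $e_N \in Z(\mathcal{O}G)$.

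The first key step is that this central idempotent is a sum of block idempotents: writing $1 \in Z(\mathcal{O}G)$ as the sum of the primitive central idempotents $e_B$ (one for each $p$-block $B$ of $G$), each product $e_N e_B$ is either $0$ or $e_B$ by primitivity, so $e_N = \sum_{B \in \mathcal{S}} e_B$ for some set $\mathcal{S}$ of blocks. The second step identifies which blocks occur. On a simple $KG$-module $V_\chi$ affording $\chi \in \Irr(G)$, the central idempotent $e_N$ acts as the scalar $1$ exactly when $V_\chi = e_N V_\chi = V_\chi^{N}$, that is, precisely when $N$ acts trivially on $V_\chi$, which is to say $N \subseteq \ker\chi$; otherwise it acts as $0$. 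In particular $e_N$ acts as the identity on the trivial module $k$, since $e_N$ sends $1$ to $\tfrac{1}{|N|}\sum_{n\in N} 1 = 1$. Because the trivial module lies in the principal block $B_0$, and because $e_B$ acts as the identity on $V_\chi$ if and only if $\chi \in \Irr(B)$, we conclude $B_0 \in \mathcal{S}$.

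The conclusion is then immediate: for any $\chi$ in the principal block, its block idempotent appears in the sum $e_N = \sum_{B \in \mathcal{S}} e_B$, so $e_N$ acts as the identity on $V_\chi$, forcing $N = O_{p'}(G) \subseteq \ker\chi$, as required. The routine verifications ($e_N^2 = e_N$, centrality, and the action of $e_B$ on simple modules) present no difficulty. The one point that genuinely requires care, and which I regard as the heart of the argument, is the block-theoretic input used above: that over a complete local ground ring the block idempotents are precisely the primitive idempotents of $Z(\mathcal{O}G)$, so that every central idempotent — in particular $e_N$ — is a sum of a subset of them. This is exactly where completeness of $\mathcal{O}$ is used, and it is what allows the trivial module to pin down the principal block inside $e_N$.
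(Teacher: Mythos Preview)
Your argument is correct. The paper itself does not give a proof at all: it simply cites \cite[Lemma~(4.12)(ii)]{Feit} and moves on. What you have written is in fact a standard self-contained proof of that very result, via the averaging idempotent $e_N=\tfrac{1}{|N|}\sum_{n\in N}n$ for $N=O_{p'}(G)$. The two key points---that $e_N$ is a central idempotent of $\mathcal{O}G$ (because $p\nmid|N|$ and $N\trianglelefteq G$), and that over a complete local ring any central idempotent is a sum of block idempotents---are exactly the right ingredients, and your identification of the principal block inside $e_N$ via the trivial module is clean. So the difference is purely one of presentation: the paper outsources the statement to the literature, while you supply the actual mechanism. Your version is more informative for a reader; the paper's version is shorter. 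One cosmetic remark: the aside about class sums is unnecessary---once you have shown $g e_N g^{-1}=e_N$ for all $g\in G$, centrality is immediate.
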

\begin{proof}
Follows from \cite[Lemma (4.12)(ii)]{Feit}.
\end{proof}

\begin{lem}
\label{lem:principalpblock}
Let $G$ be a finite group and $L$ be a $G$-lattice. If $H^2(G,L)$ contains a special element, then for every prime divisor $p$ of $|G|$ there exists a constituent of $\C \otimes_\Z L$ which lies in the principal $p$-block of $G$.
\end{lem}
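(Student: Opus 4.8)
The plan is to argue by contraposition in a localized, block-theoretic setting: I will show that if $\C\otimes_\Z L$ has no irreducible constituent in the principal $p$-block, then the $p$-primary part of $H^2(G,L)$ vanishes, and that this is incompatible with the existence of a special element.

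First I would reduce to the $p$-local picture. For $n\geq 1$ the group $H^n(G,L)$ is finite (it is annihilated by $|G|$ and finitely generated), so its $p$-primary component is $H^n(G,\mathcal O\otimes_\Z L)$ for a complete discrete valuation ring $\mathcal O$ of residue characteristic $p$, chosen large enough to be a splitting field for $G$; here I use that $\mathcal O$ is flat over $\Z$, so the cohomology of the finite group $G$ commutes with the base change. The group algebra then decomposes into blocks, $\mathcal O G=\prod_B B$, and correspondingly $\mathcal O\otimes_\Z L=\bigoplus_B(\mathcal O\otimes_\Z L)_B$. Since $H^n(G,-)=\operatorname{Ext}^n_{\mathcal O G}(\mathcal O,-)$ and $\operatorname{Ext}$ between modules lying in different blocks vanishes, while the trivial module $\mathcal O$ lies in the principal block $B_0$, I obtain
\[
H^n(G,L)_{(p)}=\operatorname{Ext}^n_{B_0}\bigl(\mathcal O,(\mathcal O\otimes_\Z L)_{B_0}\bigr)\qquad(n\geq 1).
\]
In particular, if $(\mathcal O\otimes_\Z L)_{B_0}=0$ — equivalently, if no ordinary irreducible constituent of $\C\otimes_\Z L$ lies in the principal $p$-block — then $H^2(G,L)_{(p)}=0$.

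Next I would connect this to the special element. Fix a prime $p\mid|G|$ and, by Cauchy's theorem, an element $g\in G$ of order $p$; put $C=\langle g\rangle$. Writing a special element $\alpha$ as $\alpha=\alpha_p+\alpha_{p'}$ according to the $p$-primary decomposition of the finite group $H^2(G,L)$, the target $H^2(C,\res^G_C L)$ is annihilated by $|C|=p$, so $\res^G_C\alpha_{p'}=0$ and hence $\res^G_C\alpha=\res^G_C\alpha_p$. As $\alpha$ is special, $\res^G_C\alpha\neq 0$, forcing $\alpha_p\neq 0$ and thus $H^2(G,L)_{(p)}\neq 0$. Combining this with the block computation above gives $(\mathcal O\otimes_\Z L)_{B_0}\neq 0$, that is, $\C\otimes_\Z L$ has a constituent in the principal $p$-block, as claimed.

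The main obstacle I anticipate is making the first paragraph fully rigorous, namely the block decomposition of cohomology: one must choose the coefficient ring carefully (a complete splitting ring rather than $\Z_{(p)}$), justify that passing to $\mathcal O$ captures exactly the $p$-part of the integral cohomology, and invoke the vanishing of $\operatorname{Ext}$ across distinct blocks together with the standard dictionary between block constituents of $\mathcal O\otimes_\Z L$ and ordinary irreducible constituents of $\C\otimes_\Z L$ in the principal block. Lemma \ref{Remark2} is not strictly needed for this route, though it offers an alternative characterization of principal-block characters that could be used to identify the resulting constituent more explicitly.
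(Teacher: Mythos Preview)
Your argument is correct and is precisely the block-theoretic route underlying the paper's proof, which simply defers to Lemmas~2.1 and~2.2(a) of \cite{HS}: a special element forces $H^2(G,L)_{(p)}\neq 0$ via restriction to a cyclic subgroup of order~$p$, while the block decomposition of $\mathcal{O}G$ together with $\operatorname{Ext}$-orthogonality of distinct blocks shows that $H^2(G,L)_{(p)}$ is supported on the principal-block component of $\mathcal{O}\otimes_\Z L$. The only cosmetic point is that $H^n(G,\mathcal{O}\otimes_\Z L)\cong \mathcal{O}\otimes_\Z H^n(G,L)$ need not literally equal $H^n(G,L)_{(p)}$ when $\mathcal{O}$ is ramified over $\Z_p$, but as you note this is irrelevant since you only use that one vanishes if and only if the other does, which follows from faithful flatness of $\mathcal{O}$ over $\Z_{(p)}$.
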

\begin{proof}
Follows from proofs of Lemmas 2.1 and 2.2.a of \cite{HS}.
\end{proof}

\begin{prop}
The following groups do not belong to \holht:
\begin{enumerate}[label=(\roman*),ref=\roman*]
\item \label{ht:s:sl}  $\SL_2(\F_q), \PSL_2(\F_q)$, where $q$ is a power of a prime;
\item \label{ht:s:sn}  $A_n, 2.A_n, S_n, 2.S_{n}, n \geq 5$;
\item \label{ht:s:pce} a perfect central extension of a sporadic simple group.
\end{enumerate}
\end{prop}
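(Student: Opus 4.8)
The plan is to use the necessary conditions established in Proposition~\ref{prop:htproperties}, together with the block-theoretic constraint of Lemma~\ref{lem:principalpblock} and the character-theoretic input of Lemma~\ref{Remark2}, to rule out each family in turn. The unifying strategy is this: a group $G \in \holht$ must be primitive, have an elementary abelian $2$-group as center, and---crucially---by Lemma~\ref{lem:principalpblock} must possess, for \emph{every} prime $p$ dividing $|G|$, an irreducible constituent of its faithful quaternionic lattice lying in the principal $p$-block. Since the lattice $L$ giving rise to the manifold is faithful and of quaternionic type, the relevant constituents $\chi$ all satisfy $\nu_2(\chi) = -1$; in particular they are non-linear, faithful (or nearly so) and Schur-index-$2$ characters. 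I would look for an odd prime $p \mid |G|$ for which no character in the principal $p$-block can simultaneously be faithful and have Frobenius--Schur indicator $-1$.

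\medskip

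For part~(\ref{ht:s:sl}), I would argue as follows. First, $\PSL_2(\F_q)$ has trivial center, so by condition~\ref{ht:center} (an element of $Z(G)$ of odd prime order or order $4$ is forbidden, but here we use that a quaternionic faithful representation forces $\chi \neq \cconj\chi$ to fail) --- more directly, the simple group $\PSL_2(\F_q)$ with trivial center cannot carry a faithful quaternionic representation because any such $\chi$ would need $z \in Z(G)$ of order $2$ acting as $-1$ by condition~\ref{ht:centerrep}, which is impossible when $Z(G)=1$. For $\SL_2(\F_q)$ one uses the known character table: the characters with Schur index $2$ and Frobenius--Schur indicator $-1$ are precisely the faithful ones of degree $q \pm 1$ (the genuine double-cover characters), and I would check via Lemma~\ref{lem:principalpblock} applied at the prime $p$ dividing $q$ that these do not lie in the principal $p$-block, or equivalently that the principal-block characters all have $O_{p'}(G)$ in their kernel by Lemma~\ref{Remark2}, which forces them to factor through $\PSL_2(\F_q)$ and hence to have indicator $+1$ or $0$. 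This mismatch eliminates $\SL_2(\F_q)$.

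\medskip

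For part~(\ref{ht:s:sn}), the groups $A_n$ and $S_n$ are handled exactly as $\PSL_2$ was: they have trivial center (for $n \geq 5$, $n \neq 6$ subtleties aside), and all their ordinary irreducible characters are realizable over $\R$, so $\nu_2(\chi) \in \{0,1\}$ and never $-1$; thus condition~\ref{ht:nonhomogeneous} or~\ref{ht:centerrep} fails outright. For the double covers $2.A_n$ and $2.S_n$, the spin characters are the only ones with indicator possibly $-1$, and I would again invoke the principal-block criterion at a well-chosen prime: since $O_{p'}$ of the double cover contains the central involution only when $p$ is odd, Lemma~\ref{Remark2} shows principal-$p$-block characters kill the center, hence factor through $A_n$ or $S_n$ and carry indicator in $\{0,1\}$; no faithful quaternionic constituent survives in the principal block for all primes simultaneously. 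Part~(\ref{ht:s:pce}) is the cleanest conceptually: a perfect central extension of a sporadic simple group is quasisimple, so $O_{p'}(G) = Z(G)$ for odd $p$ (as the center has order prime to almost all $p$), and Lemma~\ref{Remark2} then forces every principal-$p$-block character to contain $Z(G)$ in its kernel for those odd primes, so it descends to the simple quotient and cannot be faithful, contradicting the need for a faithful quaternionic lattice meeting the principal block at every prime.

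\medskip

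The main obstacle will be part~(\ref{ht:s:sl}) and the double-cover cases of~(\ref{ht:s:sn}): there one genuinely needs the interplay between the prime $p = \mathrm{char}(\F_q)$ (the defining characteristic) and the distribution of indicator-$(-1)$ characters among $p$-blocks, and one must verify that the unique family of quaternionic-type faithful characters (the degree-$(q\pm1)$ characters of $\SL_2$, the basic spin characters of $2.A_n$) is \emph{defect-zero} or otherwise separated from the principal block at that prime. Establishing this cleanly---rather than by case-by-case consultation of character tables---is the delicate step, and I would expect to lean on the fact that a defect-zero character (of $p'$-degree times $p$-part equal to the full $p$-part of $|G|$) forms its own block, combined with explicit degree formulas to show the quaternionic characters have defect zero at a suitable prime and hence lie outside the principal block, contradicting Lemma~\ref{lem:principalpblock}.
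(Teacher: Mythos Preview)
Your overall strategy matches the paper's: use Lemma~\ref{lem:principalpblock} together with Lemma~\ref{Remark2} (so that principal-$p$-block characters must contain $O_{p'}(G)$ in their kernel) and observe that for the quasisimple covers $\SL_2(q)$, $2.A_n$, $2.S_n$, $m.S$ the indicator-$(-1)$ characters are all faithful on the central involution and therefore miss the principal $p$-block for odd~$p$. That part is essentially the paper's argument; the paper in fact uses an arbitrary odd prime~$p$ (not the defining characteristic), so the defect-zero discussion in your last paragraph is an unnecessary detour.

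There is, however, a genuine gap in your treatment of the \emph{center-free} groups: $\PSL_2(q)$, $A_n$, $S_n$, and the sporadic simple groups themselves. For these $G$ one has $O_{p'}(G)=1$ (or $Z(G)=1$), so Lemma~\ref{Remark2} yields no information and your $O_{p'}$-argument is vacuous. Your fallback, invoking condition~\ref{ht:centerrep} of Proposition~\ref{prop:htproperties}, is a misapplication: that condition is a conditional on the existence of an involution in $Z(G)$, so it is trivially satisfied when $Z(G)=1$ and excludes nothing. The paper handles these cases directly. For $S_n$ it cites the classical fact that all irreducible characters have indicator~$+1$; for $A_n$ it gives a short Clifford-theory argument (restricting from $S_n$) showing every irreducible is absolutely irreducible or of complex type, never quaternionic. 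Your assertion that $A_n$'s characters are ``realizable over~$\R$'' is false in general (there can be complex-type constituents), though the weaker conclusion $\nu_2(\chi)\in\{0,1\}$ is correct.

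The most serious omission is in part~(\ref{ht:s:pce}). For a sporadic \emph{simple} group $S$ your quasisimple argument gives nothing, and one must actually inspect the Atlas: all sporadic simple groups have $\nu_2(\chi)\in\{0,1\}$ for every $\chi\in\Irr(S)$ \emph{except} $\mcl$, which has two characters $\chi_{11},\chi_{13}$ of indicator~$-1$. These must be excluded by an explicit check that they do not lie in the principal $11$-block. Only after this is established does your argument for the proper covers $2.S$ go through, via condition~\ref{ht:centerrep} (the paper's route): an indicator-$(-1)$ character with $\chi(z)=\chi(1)$ would factor through $S$, which has none (since $S\ne\mcl$, the Schur multiplier of $\mcl$ being of order~$3$).
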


\begin{proof}
\begin{description}
\item[Ad \ref{ht:s:sl}] These groups are excluded by Lemma \ref{Remark2}, as all characters $\chi$ with $\nu_2(\chi) = -1$ satisfy $\chi(z) = -\chi(1)$
for $z\in Z(G), |z| = 2$ and hence $O_{p'}(G) \not \subset \ker \chi$ for any prime $p \neq 2$.

\item[Ad. \ref{ht:s:sn}] Let $n \geq 5$. Then $S_n$ and $A_n$ do not have irreducible representations of quaternionic type.
For example all $\C$-irreducible representations of $S_n$ are afforded by real representations, \cite[page 56, Corollary 4.15]{Is76}.
Similar properties have irreducible $\C$-representations of a dihedral group.
For the alternating group we shall use properties of complex irreducible representations of symmetric groups and Clifford theory, see \cite[page 79]{Is76}. Let $W$ be an $\R S_n$-irreducible module. We shall consider two cases.

\begin{description}
\item[Case 1] $\Res^{S_n}_{A_n}(W)\simeq V_1\oplus V_2$ where the $V_i$ are irreducible. 
That means $(\C\otimes V_1)\oplus (\C\otimes V_2)\simeq \Res^{S_n}_{A_{n}}(\C\otimes W).$
By Clifford's Theorem the $\C\otimes V_i, i = 1,2$ are
irreducible i.e. the $V_i$ are absolutely irreducible

\item[Case 2] $V:= \Res^{S_n}_{A_n}(W)$ is an irreducible $\R G$-module.Then:

\begin{enumerate}[label=(\roman*)]
\item $\C\otimes V$ is irreducible. This means that $V$ is absolutely irreducible.
\item $\C\otimes V = U \oplus \cconj{U}$. Since the index of $A_n$ in $S_n$ is equal $2$, then by \cite[Corollary 6.19]{Is76} $U \not\simeq \cconj{U}$.
By definition, we can conclude that $V$ is of "complex type". 
\end{enumerate}
\end{description}

For the groups $2.A_n$ and $2.S_n$ we argue as in the proof of \ref{ht:s:sl}.

\item[Ad. \ref{ht:s:pce}] Assume first that $G$ is simple. Using the Atlas \cite{ATL} we find that $\nu_2(\chi) \in \{ 0,1 \}$ for all $\chi \in \Irr(G)$ except if $G = \mcl$
and $\chi \in \{ \chi_{11}, \chi_{13} \}$. A calculation in GAP \cite{GAP16} shows that $\chi_{11}$ and $\chi_{13}$ do not belong to the principal $11$-block of $G$. Hence $\mcl \notin \holht$ by Lemma \ref{lem:principalpblock}. If $G$ is non-simple, by Proposition \ref{prop:htproperties}.\ref{ht:center} we get that $G = 2.S$ for a simple group $S \ncong \mcl$ (Schur multiplier of $\mcl$ is of order $3$). Then $G \notin \holht$ by Proposition \ref{prop:htproperties}.\ref{ht:centerrep}. This concludes the proof.
\end{description}
\end{proof}

\subsection{$2$-groups}

The following proposition shows the importance of the family of $2$-groups in our investigation.

\begin{prop}[{\cite[Theorem 1]{Mann} and \cite[Satz]{WW}}]
If a finite group $G$ is non-abelian and all its non-linear characters have Frobenius-Schur indicator equal to $-1$ then $G$ is a $2$-group.
\end{prop}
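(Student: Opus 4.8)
The plan is to argue directly: assuming every nonlinear $\chi \in \Irr(G)$ has $\nu_2(\chi) = -1$, I would show that $G$ has no element of odd order $>1$, i.e.\ that $G$ is a $2$-group. Two elementary consequences of the hypothesis are worth recording first. Since $\nu_2(\chi) = -1$ forces $\chi$ to be real-valued, every \emph{non-real} irreducible character of $G$ is linear; and since a real-valued linear character has values in $\{\pm 1\}$, it takes the value $1$ on every element of odd order, so such characters are invisible on the odd part of $G$. The engine of the whole argument is the generalized Frobenius--Schur formula
\[
r(g) := |\{x \in G : x^2 = g\}| = \sum_{\chi \in \Irr(G)} \nu_2(\chi)\,\chi(g), \qquad g \in G.
\]

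Next I would evaluate both sides on an element $g$ of odd order $m$. Writing $I(H) := |\{h \in H : h^2 = 1\}|$ (so that $I(G)$ is the quantity appearing earlier), a short bijective computation shows $r(g) = I(C_G(g))$: every square root of $g$ has the form $g^{(m+1)/2}s$ with $s = x^m \in C_G(g)$ an involution or the identity, and conversely each such $s$ yields a square root. On the character side, letting $\ell_1$ denote the number of real-valued linear characters, the non-real linear terms drop out and the real ones contribute $\ell_1$, so
\[
I(C_G(g)) = \ell_1 - \sum_{\chi \text{ nonlinear}} \chi(g).
\]
Applying this at $g$ and at the identity and subtracting gives
\[
\sum_{\chi \text{ nonlinear}} \bigl(\chi(1) - \chi(g)\bigr) = I(C_G(g)) - I(G).
\]

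This identity is the step I expect to be the crux, and the one genuinely clever point. Its left-hand side is a sum of \emph{nonnegative} terms, since each nonlinear $\chi$ is real-valued and hence $\chi(g) \le \chi(1)$; its right-hand side is $\le 0$, since $C_G(g)$ is a subgroup of $G$ and so $I(C_G(g)) \le I(G)$. The two opposing inequalities sandwich both sides to $0$, forcing $\chi(g) = \chi(1)$ for every nonlinear $\chi$. In other words, \emph{every element of odd order lies in the kernel of every nonlinear irreducible character of $G$}. Everything after this is formal; the difficulty is entirely in spotting that the square-root count feeds two inequalities pointing in opposite directions.

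To conclude, set $K := \bigcap_{\chi \text{ nonlinear}} \ker \chi \trianglelefteq G$, which by the previous step contains all odd-order elements. Since $K$ acts trivially on every nonlinear constituent of the faithful regular representation, any $k \in K \cap G'$ acts trivially on every linear constituent as well, hence trivially on the whole regular representation, so $K \cap G' = 1$; therefore $[K,G] \le K \cap G' = 1$ and $K \le Z(G)$. Consequently, for every odd prime $p \mid |G|$ a Sylow $p$-subgroup $P$ consists of odd-order elements, hence $P \le K \le Z(G)$, and Schur--Zassenhaus gives $G = P \times H$. Here $H \ne 1$ and is non-abelian (otherwise $G$ would be abelian, as $P$ is), so I may choose a nonlinear $\psi \in \Irr(H)$ and a nontrivial $\lambda \in \Irr(P)$; as $P$ has odd order, $\lambda$ is non-real, so the nonlinear character $\lambda \times \psi$ satisfies $\nu_2(\lambda \times \psi) = \nu_2(\lambda)\,\nu_2(\psi) = 0$, contradicting the hypothesis. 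Hence no odd prime divides $|G|$, and $G$ is a $2$-group.
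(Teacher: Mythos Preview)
Your proof is correct and takes a genuinely different route from the paper's. The paper (following Mann) first invokes Thompson's normal $p$-complement theorem: since every nonlinear $\chi$ has $\nu_2(\chi)=-1$, all nonlinear character degrees are even, and Thompson's criterion produces a normal $2$-complement $K$. One then compares $I(G)$ with $I(T)$ for a Sylow $2$-subgroup $T\cong G/K$ via the Frobenius--Schur formula at the identity; since $I(T)\le I(G)$ trivially and $I(T)\ge I(G)$ from the formula, equality forces every nonlinear character of $G$ to factor through $G/K$, and then $K=1$.

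You bypass Thompson entirely. Rather than the formula at $g=1$ only, you use the full square-root count $r(g)=\sum_\chi \nu_2(\chi)\chi(g)$ at an arbitrary odd-order element $g$, combined with the elementary bijection giving $r(g)=I(C_G(g))$. The resulting squeeze
\[
0 \le \sum_{\chi\text{ nonlinear}}\bigl(\chi(1)-\chi(g)\bigr) = I(C_G(g)) - I(G) \le 0
\]
directly places every odd-order element in $\bigcap_{\chi\text{ nonlinear}}\ker\chi$, with no appeal to a normal complement. Your endgame---showing this intersection meets $G'$ trivially and is therefore central, splitting off a central odd Sylow subgroup, and manufacturing a nonlinear character of indicator $0$---is essentially the cleanup that the paper's ``But then $K=1$'' leaves implicit. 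The trade-off is clear: the paper's argument is shorter if one is willing to quote Thompson as a black box, while yours is fully self-contained within elementary character theory.
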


\begin{proof}
We follow \cite[p.354]{Mann}.
Let $G$ be a minimal counter example to the theorem.
First, since all non-linear irreducible characters have even degree, \cite[Theorem 1]{T} shows that $G$ has a normal $2$-complement $K$. Let $T$ be a Sylow $2$-subgroup of $G$. Then $T\simeq G/K$, therefore $G$ and $T$
have the same number of linear characters of order $2$ and all irreducible characters
of $T$ can be considered as characters of $G$. The Frobenius-Schur formula
\begin{equation}\label{invo}
I(G) = \Sigma_{\chi\in \Irr(G)}\nu_{2}(\chi)\chi(1),
\end{equation}
shows that $T$ has at least as many involutions as $G$ has. This is possible only
if $G$ and $T$ have the same number of involutions, and then (\ref{invo})
implies that all non-linear irreducible characters of $G$ are characters of $T,$
which means that $K$ is contained in the kernels of all these characters.
But then $K = 1$ and $G$ is a $2$-group.
\end{proof}

\begin{exl}
Let $E$ be an extraspecial $2$-group of order $2^{2m+1}, m \geq 1$. Then $E/E'$ is elementary abelian and there is a unique irreducible character of degree $2^m$ (see \cite[Problem 2.13]{Is76}).

There are two non-isomorphic such groups and $\nu_2(\chi)=-1$ in one of these copies (if $E$ is a central product of the quaternion group of order $8$ and several dihedral groups of order $8$).

In particular, $E$ does not belong to ${\mathcal HT}$ by the main result of \cite{Lu18}.
\end{exl}

\begin{rem}
Proposition \ref{prop:htproperties} may be used to show that the group $[64,245]$, defined in the proof of Theorem \ref{main}, is the only group of order less than or equal to $64$, which is the holonomy group of a flat manifold from the class $\hT$.
\end{rem}

\end{document}